\newtheorem{definition}{Definition}
\newtheorem{lemma}[definition]{Lemma}
\newtheorem{proposition}[definition]{Proposition}
\newtheorem{example}[definition]{Example}
\newtheorem{corollary}[definition]{Corollary}
\newtheorem{theorem}[definition]{Theorem}
\newcommand{\Z}{\mathbb{Z}}
\newcommand{\R}{\mathbb{R}}
\newcommand{\bzero}{\mathbf{0}}
\newcommand{\ba}{\mathbf{a}}
\newcommand{\bb}{\mathbf{b}}
\newcommand{\bt}{\mathbf{t}}
\newcommand{\bx}{\mathbf{x}}
\newcommand{\by}{\mathbf{y}}
\newcommand{\br}{\mathbf{r}}
\newcommand{\diag}{\mathrm{diag}}
\begin{document}

\title{A note on matrices mapping a positive vector onto its element-wise inverse}


\author[S.~Labb\'e]{S\'ebastien Labb\'e}
\address[S.~Labb\'e]{CNRS, LaBRI, UMR 5800, F-33400 Talence, France}
\email{sebastien.labbe@labri.fr}
\urladdr{http://www.slabbe.org/}


\keywords{Primitive matrices \and Stochastic matrices \and Fixed-Point theorems \and Perron theorem}
\subjclass[2010]{Primary 15B48; Secondary 47H10 \and 15B51}


\date{\today}
\begin{abstract}
For any primitive matrix $M\in\R^{n\times n}$ 
with positive diagonal entries,
we prove the existence and uniqueness of a
positive vector $\bx=(x_1,\dots,x_n)^t$ such that
$M\bx=(\frac{1}{x_1},\dots,\frac{1}{x_n})^t$.
The contribution of this note is to provide an alternative proof of a result of
Brualdi et al. (1966) on the diagonal equivalence of a nonnegative matrix to a
stochastic matrix.
\end{abstract}

\maketitle

\section{Introduction}

In this note, we consider 
matrices mapping a vector with positive entries onto its element-wise inverse.
We prove unicity and existence of such a vector for primitive matrices,
that is nonnegative matrices some power of which is positive,
 with positive diagonal entries.
The main result is:

\begin{theorem}\label{thm:main}
Let $M\in\R^{n\times n}_{\geq0}$ be a primitive matrix
with positive diagonal entries.
Then there exists a unique
vector $\bx=(x_1,\dots,x_n)^t$ with positive entries such that
$M\bx=(\frac{1}{x_1},\dots,\frac{1}{x_n})^t$.
\end{theorem}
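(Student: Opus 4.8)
The plan is to recast the problem as a fixed-point statement. Writing $D=\diag(\bx)$, the desired identity $M\bx=(1/x_1,\dots,1/x_n)^t$ is equivalent to $DMD\,(1,\dots,1)^t=(1,\dots,1)^t$, i.e.\ to $DMD$ being row-stochastic; this is the bridge to the Brualdi et al.\ statement. Equivalently, defining $T\colon\R^n_{>0}\to\R^n_{>0}$ by $T(\bx)_i=1/(M\bx)_i$, a positive vector solves the equation precisely when it is a fixed point of $T$. Since $M$ has positive diagonal, $(M\bx)_i\ge M_{ii}x_i>0$ for every $\bx$ with positive entries, so $T$ is a well-defined continuous self-map of the open orthant. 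I would then treat existence and uniqueness separately.

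For existence I would use Brouwer's theorem. First I would record a priori bounds: any fixed point satisfies $M_{ii}x_i^2\le x_i(M\bx)_i=1$, hence $x_i\le b_i:=1/\sqrt{M_{ii}}$, and then $(M\bx)_i\le\sum_j M_{ij}b_j=:c_i$, hence $x_i=1/(M\bx)_i\ge 1/c_i$. Set $a_i:=1/(2c_i)$ and $K:=\prod_i[a_i,b_i]$, a compact convex subset of the open orthant (one checks $a_i<b_i$). Let $P$ be the coordinatewise projection onto $K$ and apply Brouwer to the continuous self-map $P\circ T$ of $K$, obtaining $\bx^{*}=P(T(\bx^{*}))$. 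The crux is to show $\bx^{*}$ lies in the interior of $K$, so that $P$ acts trivially and $T(\bx^{*})=\bx^{*}$. The lower faces are excluded because $(M\bx^{*})_i\le c_i$ gives $T(\bx^{*})_i\ge 1/c_i>a_i$. For the upper faces I would use that a primitive matrix is irreducible, so for $n\ge 2$ every row carries an off-diagonal positive entry; hence $(M\bx^{*})_i>M_{ii}x^{*}_i$, which makes $T(\bx^{*})_i<b_i$ whenever $x^{*}_i=b_i$, a contradiction. (The case $n=1$ is immediate.)

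For uniqueness, suppose $\bx,\by$ are two positive solutions. Multiplying the $i$-th equation by $x_i$ (resp.\ $y_i$) gives $\sum_j M_{ij}x_ix_j=1=\sum_j M_{ij}y_iy_j$. Putting $t_i:=x_i/y_i$ and subtracting yields
$$\sum_{j=1}^{n} M_{ij}\,y_iy_j\,(t_it_j-1)=0\qquad(1\le i\le n),$$
a family of identities with nonnegative coefficients whose diagonal coefficient $M_{ii}y_i^2$ is strictly positive. Let $R=\max_i t_i$, attained at $k$, and $s=\min_i t_i$, attained at $\ell$. If $R>1$, the diagonal term at $i=k$ is $M_{kk}y_k^2(R^2-1)>0$, so some off-diagonal term must be negative, forcing a neighbour with $t_j<1/R$ and hence $sR<1$; but then $s<1$, and the diagonal term at $i=\ell$ is negative, forcing a neighbour with $t_j>1/s$ and hence $sR>1$, a contradiction. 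Therefore $R\le 1$, and the symmetric argument gives $s\ge 1$; since $s\le R$ this forces $R=s=1$, i.e.\ $\bx=\by$.

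I expect the main obstacle to be existence rather than uniqueness: the map $T$ is only nonexpansive (not a strict contraction) in the natural Hilbert and Thompson metrics when $M$ has zero entries, so a Banach-type argument is unavailable, and near the boundary of the orthant $T$ can push points toward the faces. The projection-plus-a-priori-bounds device circumvents this, with the irreducibility supplied by primitivity being exactly what rules out the upper faces. By contrast, uniqueness is elementary and, as the argument shows, already follows from nonnegativity together with the positivity of the diagonal.
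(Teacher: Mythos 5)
Your proof is correct, but both halves take a genuinely different route from the paper's. For \emph{existence}, the paper applies Brouwer to the map $F^{(M)}$ that solves each quadratic $m_{ii}x_i^2+b_ix_i-1=0$ coordinate-wise; since that map is decreasing, the box $Box(\bzero,F^{(M)}(\bzero))$ is automatically invariant and no irreducibility is needed, so the paper's Proposition~\ref{prop:reductiontobrouwer} covers all nonnegative matrices with positive diagonal. You instead apply Brouwer to the projected map $P\circ T$ with $T(\bx)_i=1/(M\bx)_i$, and you genuinely need primitivity (via irreducibility, to get a positive off-diagonal entry in each row) to push the fixed point off the upper faces $x_i=1/\sqrt{M_{ii}}$ --- and this is not a removable artifact: for the triangular matrix of Example~\ref{ex:triangular} the solution has $x_1=1=1/\sqrt{m_{11}}$ exactly on that face. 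So your existence argument is slightly less general, though entirely adequate for the theorem as stated. For \emph{uniqueness}, the paper uses Perron's theorem: if $XMX$ and $YMY$ are stochastic then $(XMY)^2=(XMX)(YMY)$ is stochastic, and a Perron argument upgrades this to $XMY$ stochastic, forcing $X=Y$; this needs primitivity but not positive diagonal entries. Your max/min argument on the ratios $t_i=x_i/y_i$ is elementary, avoids Perron entirely, and uses only nonnegativity plus $m_{ii}>0$; I checked the chain $R>1\Rightarrow sR<1\Rightarrow s<1\Rightarrow sR>1$ and it is sound (the degenerate cases $n=1$ and $k=\ell$ cause no trouble). This is a real gain: combined with the paper's own existence result (Proposition~\ref{prop:reductiontobrouwer}), your uniqueness argument yields existence \emph{and} uniqueness for every nonnegative matrix with positive diagonal entries --- the full Brualdi et al.\ statement in that setting --- whereas the paper obtains uniqueness only for primitive matrices (Proposition~\ref{prop:perron-unicity}) or under the stronger diagonal-dominance hypothesis $2m_{ii}>m_{ij}$ of Proposition~\ref{prop:banach-unicity}.
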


It turns out that this question was already answered in 1966 under an
equivalent form.
In \cite{brualdi_diagonal_1966}, it was proved that if $A$ is a nonnegative
square matrix with positive diagonal entries, then there exists a unique
diagonal matrix $D$ with positive diagonal entries such that $DAD$ is row
stochastic (see also \cite{sinkhorn_relationship_1966} who proved it for
positive matrices $A$).
The equivalence is explained below in Lemma~\ref{lem:3equivalent}.

As a consequence, the contribution of this note is to provide an alternative
proof of the above result. Unicity for primitive matrices is obtained as a
consequence of Perron theorem whereas existence for nonnegative matrices with
positive diagonal entries is deduced from the Brouwer fixed-point theorem. 


This note is structured as follows.
In Section 2, we present an equivalent system of quadratic equations to be solved.
In Section 3, we deduce unicity for primitive matrices from Perron theorem.
In Section 4, we reduce the question to finding fixed-points of a function
and we recall Brouwer and Banach fixed-point theorems in Section 5.
In Section 6, we use Brouwer fixed-point theorem to prove existence for
nonnegative matrices with positive diagonal entries.
In Section 7, we use Banach fixed-point theorem to prove existence and unicity
for nonnegative matrices with relatively large enough diagonal entries
including matrices which are not primitive (Proposition~\ref{prop:banach-unicity}).


\section{A system of quadratic equations}

We say that a vector or a matrix is \emph{nonnegative} (resp. \emph{positive})
if all of its entries are nonnegative (resp. positive).

\begin{lemma}\label{lem:3equivalent}
Let $M=(m_{ij})\in\R^{n\times n}$ be a nonnegative matrix and $\bx=(x_1,\dots,x_n)^t$ be a
positive vector. The following conditions are equivalent.
\begin{enumerate}[\rm (i)]
\item $M\bx=(\frac{1}{x_1},\dots,\frac{1}{x_n})^t$,
\item $\diag(\bx) M \diag(\bx)$ is a stochastic matrix,
\item for every $i\in\{1,\dots,n\}$,
\begin{equation}\label{eq:2}
x_i\sum_{j=1}^n m_{ij}x_j = 1.
\end{equation}
\end{enumerate}
\end{lemma}

\begin{proof}
    (i) $\iff$ (ii).
The matrix $\diag(\bx) M \diag(\bx)$ is stochastic if and only if
$(1,\dots, 1)^t$ is a right eigenvector with eigenvalue $1$, that is,
\begin{equation}\label{eq:stochastic}
\diag(\bx) M
\diag(\bx)
(1,\dots, 1)^t
= (1,\dots, 1)^t
\end{equation}
which is equivalent to $M\bx=\diag(\bx)^{-1}(1,\dots, 1)^t=(\frac{1}{x_1},\dots,\frac{1}{x_n})^t$

    (ii) $\iff$ (iii).
Let $\br_i$ be the $i$-th row of the matrix $M$.
We develop \eqref{eq:stochastic} and we get
\[
\diag(\bx) M \bx
=
\diag(\bx) (\br_1\cdot\bx,\dots,\br_n\cdot\bx)^t
=
(x_1\br_1\cdot\bx,\dots,x_n\br_n\cdot\bx)^t
=
(1,\dots, 1)^t.
\]
This equation is verified if and only if, for each $i\in\{1,\dots,n\}$, 
the quadratic Equation~\eqref{eq:2} in $x_1,\dots,x_n$ holds.
\end{proof}

The system of equations~\eqref{eq:2}
for $i\in\{1,\dots,n\}$ is illustrated in Figure~\ref{fig:n23}
for $n=2$ and $n=3$.

\begin{figure}
\begin{center}
\includegraphics[width=.40\linewidth]{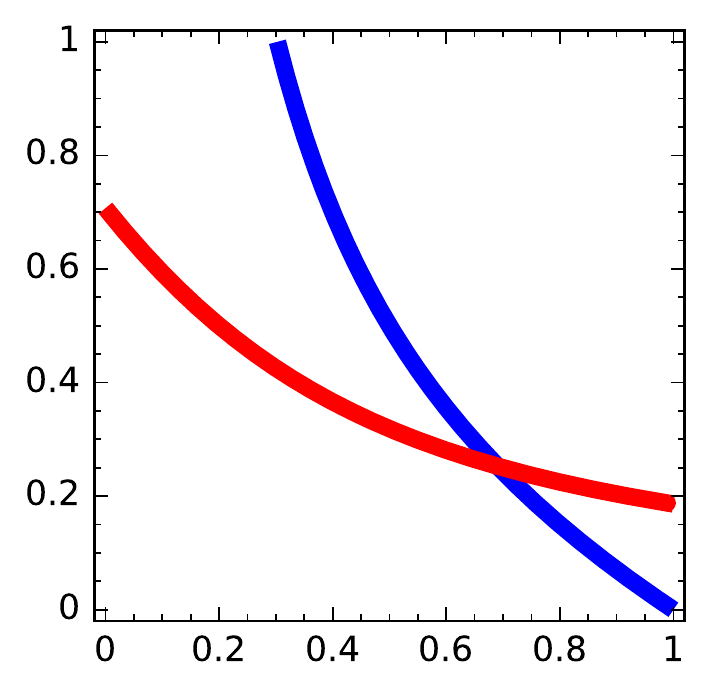}
\includegraphics[width=.45\linewidth,trim={3cm 3cm 3cm 3cm},clip]{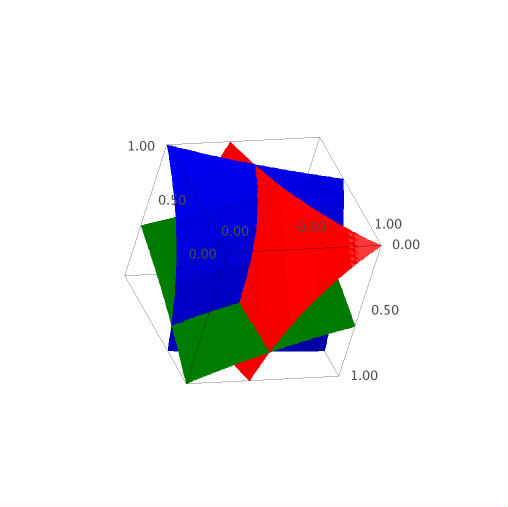}
\end{center}
\caption{
Left:
the two quadratic curves
$x_1^{2} + 3x_1 x_2 = 1$,
$5x_1 x_2 + 2x_2^{2} = 1$,
intersect in a unique point in the box $[0,1]^2$.
Right:
the three quadratic surfaces
$x_1^{2} + 2  x_1 x_2 + 2 x_1 x_3 = 1$,
$x_1 x_2 + x_2^{2} + x_2 x_3 = 1$,
$x_1 x_3 + 3 x_2 x_3 + x_3^{2} = 1$
intersect in a unique point in the box $[0,1]^3$.
}
\label{fig:n23}
\end{figure}

\section{Uniqueness for primitive matrices}

A \emph{primitive} matrix is a nonnegative matrix some power of which is
positive.

\begin{lemma}\label{lem:M2v}
Let $M\in\R_{\geq0}^{n\times n}$ be a primitive matrix
and $v\in\R_{>0}^n$.
If $M^2\,v=v$, then $Mv=v$.
\end{lemma}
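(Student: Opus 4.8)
The plan is to invoke the Perron theorem for the primitive matrix $M$ in two ways: once through a positive left eigenvector to identify the spectral radius, and once through the strict dominance of the Perron root to force the conclusion. First I would record the two features of primitivity that the argument needs. Since some power $M^k$ is positive, so is $(M^t)^k=(M^k)^t$, hence $M^t$ is primitive as well; applying Perron's theorem to $M^t$ yields a strictly positive left eigenvector $y\in\R_{>0}^n$ of $M$, that is $y^t M=\rho\, y^t$ where $\rho=\rho(M)>0$ is the spectral radius. Perron's theorem applied to $M$ itself also gives that $\rho$ is the \emph{unique} eigenvalue of maximal modulus, so that every other eigenvalue $\lambda$ of $M$ satisfies $|\lambda|<\rho$.

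Next I would pin down the value of $\rho$. Multiplying the hypothesis $M^2 v=v$ on the left by $y^t$ and using $y^t M^2=\rho^2 y^t$ gives $\rho^2\,(y^t v)=y^t v$. As $y$ and $v$ are both strictly positive we have $y^t v>0$, so $\rho^2=1$ and therefore $\rho=1$.

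The crux is then a short computation. Set $z=Mv-v$. Applying $M$ and using $M^2 v=v$ yields $Mz=M^2 v-Mv=v-Mv=-z$, so $z$ is an eigenvector of $M$ for the eigenvalue $-1$ unless $z=0$. But $|-1|=1=\rho$, which would contradict the strict dominance of $\rho$ established above. Hence $z=0$, i.e.\ $Mv=v$, as desired.

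I expect the only delicate point to be the appeal to strict dominance in this last step, and this is precisely where primitivity, rather than mere nonnegativity or irreducibility, is indispensable: for the irreducible but imprimitive matrix $\left(\begin{smallmatrix}0&1\\1&0\end{smallmatrix}\right)$ one has $M^2=I$, so $M^2 v=v$ holds for every $v$ while $Mv=v$ fails in general, exactly because $-1$ is then an eigenvalue of maximal modulus. Ruling this out via Perron's dominance statement is the heart of the proof.
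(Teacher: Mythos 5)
Your proof is correct, but it takes a genuinely different route from the paper's. The paper applies Perron's theorem to the primitive matrix $M^2$: both $v$ and $Mv$ are positive vectors fixed by $M^2$ (note $Mv>0$ because a primitive matrix has no zero row), so by uniqueness up to scaling of the positive eigenvector they are collinear, $v=\lambda Mv$; then $v=\lambda^2 M^2v=\lambda^2 v$ gives $\lambda^2=1$, and positivity forces $\lambda=1$. You instead use the other half of the Perron package: a positive left eigenvector to pin down $\rho(M)=1$ from $M^2v=v$, and the strict spectral dominance of the Perron root to exclude $-1$ as an eigenvalue, so that $z=Mv-v$, which satisfies $Mz=-z$, must vanish. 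Both arguments lean on primitivity in an essential way --- the paper needs $M^2$ to be primitive and to have $v$, $Mv$ as positive fixed vectors, while you need the spectral gap --- and both are of comparable length. Your version makes the spectral obstruction explicit and isolates exactly which eigenvalue would have to exist for the lemma to fail (your discussion of $\left(\begin{smallmatrix}0&1\\1&0\end{smallmatrix}\right)$ is apt and correct); the paper's version stays entirely at the level of positive eigenvectors and never mentions the rest of the spectrum.
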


\begin{proof}
We already have that $v$ is a positive vector fixed by $M^2$ which is
primitive. But so is $Mv$:
\[
M^2(Mv) = M(M^2v) = Mv.
\]
By Perron's theorem, $v$ and $Mv$ must be colinear, that is, there exists $\lambda\in\R$ such that $v = \lambda Mv$. Then, $v = \lambda^2 M^2v = \lambda^2 v$ and thus $\lambda^2=1$. Since $v$ and $Mv$ are positive, we deduce $\lambda=1$.
\end{proof}

\begin{proposition}\label{prop:perron-unicity}
Let $M\in\R^{n\times n}$ be a primitive matrix.
If there exists a positive vector $\bx=(x_1,\dots,x_n)^t\in\R_{>0}^n$ such that
$M\bx=(\frac{1}{x_1},\dots,\frac{1}{x_n})^t$, then it is unique.
\end{proposition}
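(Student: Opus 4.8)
The plan is to assume two positive solutions and collapse them into one using Lemma~\ref{lem:M2v}. Suppose $\bx=(x_1,\dots,x_n)^t$ and $\by=(y_1,\dots,y_n)^t$ are positive vectors with $M\bx=(\frac{1}{x_1},\dots,\frac{1}{x_n})^t$ and $M\by=(\frac{1}{y_1},\dots,\frac{1}{y_n})^t$, and aim to show $\bx=\by$. Writing $\mathbf{1}=(1,\dots,1)^t$, the device I would introduce is the single auxiliary matrix $R=\diag(\by)\,M\,\diag(\bx)$ together with the positive ratio vector $\mathbf{e}=\diag(\by)\diag(\bx)^{-1}\mathbf{1}=(\frac{y_1}{x_1},\dots,\frac{y_n}{x_n})^t$. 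The point of this reformulation is that $\bx=\by$ is equivalent to the single statement $\mathbf{e}=\mathbf{1}$.

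The heart of the argument is that $R$ interchanges $\mathbf{1}$ and $\mathbf{e}$. Using $\diag(\bx)\mathbf{1}=\bx$ and the hypothesis on $\bx$, I would compute
\[
R\mathbf{1}=\diag(\by)\,M\,\diag(\bx)\mathbf{1}=\diag(\by)\,M\bx=\diag(\by)\Big(\tfrac{1}{x_1},\dots,\tfrac{1}{x_n}\Big)^t=\mathbf{e},
\]
and symmetrically, using $\diag(\bx)\mathbf{e}=\by$ and the hypothesis on $\by$, one gets $R\mathbf{e}=\diag(\by)\,M\by=\mathbf{1}$. Hence $R^2\mathbf{1}=R\mathbf{e}=\mathbf{1}$, so $\mathbf{1}$ is a positive vector fixed by $R^2$. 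Since conjugation by the positive diagonal matrices $\diag(\by)$ and $\diag(\bx)$ leaves the zero/nonzero pattern of $M$ unchanged, $R^k$ is positive exactly when $M^k$ is, so $R$ is primitive. Lemma~\ref{lem:M2v} then yields $R\mathbf{1}=\mathbf{1}$; combined with $R\mathbf{1}=\mathbf{e}$ this gives $\mathbf{e}=\mathbf{1}$, i.e. $y_i=x_i$ for all $i$, so $\bx=\by$.

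The one genuinely creative step is the choice of $R=\diag(\by)\,M\,\diag(\bx)$: it is engineered precisely so that the all-ones vector and the ratio vector are swapped, which is exactly what converts the two defining relations into the hypothesis $R^2\mathbf{1}=\mathbf{1}$ of Lemma~\ref{lem:M2v}. I expect the main (and essentially only) obstacle to be recognizing this auxiliary matrix; once it is in hand, the two identities above are immediate and primitivity of $R$ follows from the support argument, since primitivity depends only on the directed graph of nonzero entries, which $R$ shares with $M$. Note that no assumption on the diagonal of $M$ is needed here, consistent with the proposition requiring only primitivity for uniqueness.
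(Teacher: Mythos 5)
Your proof is correct and is essentially the paper's own argument: the paper forms the auxiliary matrix $X M Y$ with $X=\diag(\bx)$, $Y=\diag(\by)$, shows its square is stochastic via $(XMY)^2=(XMX)(YMY)$, and invokes Lemma~\ref{lem:M2v} to conclude $\bx=\by$, which is exactly your construction with $R=YMX$ and the ratio vector $\mathbf{e}$ making the computation explicit. If anything, you are slightly more careful than the paper in spelling out why $R$ is primitive (the zero pattern is unchanged by positive diagonal factors), a point the paper leaves implicit when applying Lemma~\ref{lem:M2v}.
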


\begin{proof}
Let $\bx=(x_1,\dots,x_n)^t,\by=(y_1,\dots,y_n)^t\in\R_{>0}^n$.
Suppose that $X=\diag(\bx)$ and $Y=\diag(\by)$ are such that $XMX$
and $YMY$ are both stochastic.
The product of diagonal matrices commutes, so we have
\[
(XMY)^2 = XM(YX)MY = XM(XY)MY = (XMX)(YMY).
\]
We conclude that $(XMY)^2$ is stochastic.
From Lemma~\ref{lem:M2v}, we conclude that $XMY$ is stochastic.
Thus we have
\[
XMY(1,\dots,1)^t = (1,\dots,1)^t
\quad
\text{ and }
\quad
YMY(1,\dots,1)^t = (1,\dots,1)^t
\]
and
\[
(x_1^{-1},\dots,x_n^{-1})
=X^{-1}(1,\dots,1)^t
= MY(1,\dots,1)^t
= Y^{-1}(1,\dots,1)^t
=(y_1^{-1},\dots,y_n^{-1}).
\]
Therefore $\bx=\by$.
The conclusion follows from Lemma~\ref{lem:3equivalent}.
\end{proof}

\section{Solutions are fixed points}

It can be seen in Figure~\ref{fig:n23} that the surfaces of each equation in
the positive octant are functions of the form $y=f(x)$ or $z=f(x,y)$. We now
formalize and prove this.

Let $M=(m_{ij})\in\R^{n\times n}$ be a nonnegative matrix.
For each $i\in\{1,\dots,n\}$
and $(x_1,\dots,x_{i-1},x_{i+1},\dots,x_n)\in\R_{\geq0}^{n-1}$,
we denote
\begin{equation}\label{eq:bi}
b_i=\sum_{j\neq i} m_{ij}x_j.
\end{equation}
For each $i\in\{1,\dots,n\}$,
we define a function
$f^{(M)}_i:\R_{\geq0}^{n-1}\to\R_{>0}$:
\begin{equation}
f^{(M)}_i(x_1,\dots,x_{i-1},x_{i+1},\dots,x_n) =
\begin{cases}
b_i^{-1} & \text{ if } m_{ii}=0,\\
\displaystyle
\frac{-b_i+\sqrt{b_i^2+4m_{ii}}}{2m_{ii}} & \text{ if } m_{ii}\neq 0.
\end{cases}
\end{equation}

\begin{lemma}\label{lem:existenceoffi}
Let $M=(m_{ij})\in\R^{n\times n}$ be a nonnegative real matrix,
$i\in\{1,\dots,n\}$ and assume $\bx=(x_1,\dots,x_n)^t\in\R_{>0}^n$.
The vector $\bx$ satisfies Equation~\eqref{eq:2} if and only if
$x_i=f_i^{(M)}(x_1,\dots,x_{i-1},x_{i+1},\dots,x_n)$.
\end{lemma}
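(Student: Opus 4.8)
The plan is to treat Equation~\eqref{eq:2} as a one-variable equation in $x_i$, holding the other coordinates fixed, and to solve it explicitly. First I would isolate the diagonal contribution: writing out the sum in \eqref{eq:2} and separating the term $j=i$ from the rest gives
\[
x_i\left(m_{ii}x_i + \sum_{j\neq i}m_{ij}x_j\right) = 1,
\]
which, using the abbreviation $b_i=\sum_{j\neq i}m_{ij}x_j$ from \eqref{eq:bi}, becomes the quadratic equation
\[
m_{ii}\,x_i^2 + b_i\,x_i - 1 = 0.
\]
So \eqref{eq:2} holds if and only if $x_i$ is a positive root of this equation, and the task reduces to identifying that root in each of the two cases appearing in the definition of $f_i^{(M)}$.

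In the case $m_{ii}=0$ the equation is linear, namely $b_i x_i = 1$. Since $\bx$ is positive, any solution forces $b_i = 1/x_i > 0$, and then $x_i = b_i^{-1} = f_i^{(M)}(x_1,\dots,x_{i-1},x_{i+1},\dots,x_n)$; conversely $x_i=b_i^{-1}$, which presupposes $b_i>0$, clearly satisfies $b_i x_i = 1$. In the case $m_{ii}\neq 0$, since $M$ is nonnegative we have $m_{ii}>0$ and $b_i\geq 0$, so the discriminant $b_i^2 + 4m_{ii}$ is strictly positive and the quadratic formula gives the two real roots $\frac{-b_i\pm\sqrt{b_i^2+4m_{ii}}}{2m_{ii}}$.

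The key observation is a sign analysis selecting the relevant root. Because $b_i\geq 0$, we have $\sqrt{b_i^2+4m_{ii}} > \sqrt{b_i^2} = b_i$, so the root with the $+$ sign is strictly positive while the root with the $-$ sign is strictly negative. Hence among the two roots exactly one lies in $\R_{>0}$, and it equals $f_i^{(M)}(x_1,\dots,x_{i-1},x_{i+1},\dots,x_n)$ by definition. Combining the two cases, a positive $\bx$ satisfies \eqref{eq:2} if and only if $x_i=f_i^{(M)}(x_1,\dots,x_{i-1},x_{i+1},\dots,x_n)$. The only point requiring care is the well-definedness and positivity of $f_i^{(M)}$, i.e.\ checking that the chosen branch indeed lands in $\R_{>0}$; once the sign of the discriminant and of each root is pinned down this is immediate, so I expect no serious obstacle beyond this bookkeeping.
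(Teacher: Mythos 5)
Your proposal is correct and follows essentially the same route as the paper: rewrite Equation~\eqref{eq:2} as the quadratic $m_{ii}x_i^2+b_ix_i-1=0$ in $x_i$, then split into the cases $m_{ii}=0$ (linear, unique positive solution $b_i^{-1}$) and $m_{ii}\neq 0$ (two real roots, exactly one positive, which is $f_i^{(M)}$). Your extra remark that the $m_{ii}=0$ case forces $b_i>0$ is a small point the paper leaves implicit, but it does not change the argument.
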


\begin{proof}
Equation~\eqref{eq:2} can be seen as a quadratic equation of the variable $x_i$:
\begin{align}\label{eq:3}
m_{ii}x_i^2
+ b_ix_i 
- 1 = 0
\end{align}
where $b_i=\sum_{j\neq i} m_{ij}x_j$ is the coefficient of $x_i$ in this quadratic polynomial.
If $m_{ii}=0$, then
$b_ix_i=1$ and there is only one solution
$x_i= b_i^{-1}$ to Equation~\eqref{eq:3}.
Moreover $x_i>0$.
If $m_{ii}\neq 0$, then there are exactly two real solutions
\[
\frac{-b_i-\sqrt{b_i^2+4m_{ii}}}{2m_{ii}}<0
\qquad
\text{and}
\qquad
\frac{-b_i+\sqrt{b_i^2+4m_{ii}}}{2m_{ii}}>0
\]
to Equation~\eqref{eq:3}, the second one being positive.
\end{proof}

For every matrix $M\in\R^{n\times n}$, let
\[
    F^{(M)}:\R_{\geq0}^n\to\R_{>0}^n:\bx\mapsto
\left(\begin{array}{l}
f^{(M)}_1(x_2,x_3,\dots,x_n)\\
f^{(M)}_2(x_1,x_3,\dots,x_n)\\
\dots\\
f^{(M)}_n(x_1,x_2,\dots,x_{n-1})
\end{array}\right)
\]
We now have a new equivalent statement.

\begin{lemma}\label{lem:ifffixedpoint}
Let $M\in\R^{n\times n}$ be a nonnegative matrix and $\bx=(x_1,\dots,x_n)^t$ be a
positive vector. Then
$M\bx=(\frac{1}{x_1},\dots,\frac{1}{x_n})^t$
if and only if
$\bx$ is a fixed point of $F^{(M)}$.
\end{lemma}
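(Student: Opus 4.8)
The plan is to chain together the two equivalences already established in the excerpt, so that the vector identity is translated coordinate by coordinate into the fixed-point condition. First I would invoke the equivalence (i) $\iff$ (iii) of Lemma~\ref{lem:3equivalent}, which tells us that $M\bx=(\frac{1}{x_1},\dots,\frac{1}{x_n})^t$ holds if and only if Equation~\eqref{eq:2} is satisfied for every $i\in\{1,\dots,n\}$. This step reduces the single vector equation to a conjunction of $n$ scalar equations, one attached to each coordinate.

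Next I would apply Lemma~\ref{lem:existenceoffi} coordinatewise. For each fixed $i$, that lemma states that the positive vector $\bx$ satisfies Equation~\eqref{eq:2} if and only if $x_i=f_i^{(M)}(x_1,\dots,x_{i-1},x_{i+1},\dots,x_n)$. Since the index $i$ ranges independently, the full system of $n$ equations holds if and only if $x_i=f_i^{(M)}(x_1,\dots,x_{i-1},x_{i+1},\dots,x_n)$ for every $i\in\{1,\dots,n\}$.

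Finally I would observe that, by the very definition of $F^{(M)}$, requiring $x_i=f_i^{(M)}(x_1,\dots,x_{i-1},x_{i+1},\dots,x_n)$ simultaneously for all $i$ is exactly the statement $\bx=F^{(M)}(\bx)$, that is, that $\bx$ is a fixed point of $F^{(M)}$. Combining the three equivalences gives the claim. There is essentially no hard step: the only point requiring a moment of care is that $\bx$ is assumed positive throughout, which is precisely the hypothesis under which both Lemma~\ref{lem:3equivalent} and Lemma~\ref{lem:existenceoffi} apply and under which the map $F^{(M)}$ takes values in $\R_{>0}^n$, so that selecting the unique positive root of the quadratic~\eqref{eq:3} is legitimate and keeps us inside the intended domain.
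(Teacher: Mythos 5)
Your proposal is correct and follows exactly the same route as the paper: both chain the equivalence (i) $\iff$ (iii) of Lemma~\ref{lem:3equivalent} with the coordinatewise equivalence of Lemma~\ref{lem:existenceoffi} and then unwind the definition of $F^{(M)}$. Your remark about positivity keeping everything in the intended domain is a nice touch but does not change the argument.
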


\begin{proof}
Let $\bx=(x_1,\dots,x_n)^t>0$.
We have that 
$F^{(M)}(\bx)=\bx$
if and only if
\[
x_i=f^{(M)}_i(x_1,\dots,x_{i-1},x_{i+1},\dots,x_n)
\]
for every $i\in\{1,\dots,n\}$
if and only if $\bx$ satisfies
Equation~\eqref{eq:2}
for every $i\in\{1,\dots,n\}$
from Lemma~\ref{lem:existenceoffi}
if and only if
$M\bx=(\frac{1}{x_1},\dots,\frac{1}{x_n})^t$
from Lemma~\ref{lem:3equivalent}.
\end{proof}

\section{Fixed-Point theorems}

From \cite{MR816732}, we recall some classical fixed-point theorems.


\begin{theorem}[Brouwer Fixed Point Theorem]
Every continuous function from a closed ball of a Euclidean space into itself
has a fixed point.
\end{theorem}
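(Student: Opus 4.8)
The plan is to establish the geometric heart of the statement, namely that there is no continuous retraction of a closed ball onto its boundary sphere, and then to deduce the fixed-point property from it. First I would reduce to the standard closed unit ball $B=\{x\in\R^n:\|x\|\le 1\}$, since every closed ball of a Euclidean space is homeomorphic to $B$ and the fixed-point property is preserved under homeomorphism. Hence it suffices to treat an arbitrary continuous map $f\colon B\to B$.

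Next I would argue by contradiction. Suppose $f$ has no fixed point, so that $f(x)\neq x$ for every $x\in B$. For each such $x$, consider the ray emanating from $f(x)$ and passing through $x$; because $x\neq f(x)$ this ray is well defined and meets the boundary sphere $S^{n-1}=\partial B$ in a unique point, which I would call $r(x)$. Solving the resulting quadratic for the ray parameter exhibits $r(x)$ as a continuous function of $x$, and one checks directly that $r(x)=x$ whenever $x\in S^{n-1}$. Thus $r\colon B\to S^{n-1}$ is a continuous retraction of $B$ onto its boundary.

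The final and hardest step is to show that no such retraction can exist; this is where all the genuine content lies. I would invoke singular homology: applying the functor $H_{n-1}(-)$ to the composition $S^{n-1}\hookrightarrow B\xrightarrow{r}S^{n-1}$, which equals the identity on $S^{n-1}$, forces the identity map of $H_{n-1}(S^{n-1})\cong\Z$ (for $n\ge 2$) to factor through $H_{n-1}(B)=0$, a contradiction. The case $n=1$ follows at once from the intermediate value theorem (a connected interval cannot map continuously onto the two-point set $S^0$ while fixing both points), and $n=0$ is trivial. The main obstacle is precisely this non-retraction lemma, which resists elementary proof: one needs either the homological computation above, a degree-theoretic argument, or a combinatorial substitute such as Sperner's lemma applied to a simplex (homeomorphic to $B$), combined with a compactness argument that passes from approximate fixed points on finer and finer triangulations to an exact fixed point.
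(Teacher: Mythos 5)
The paper does not actually prove this statement: it is quoted as a classical theorem recalled from the literature and then used as a black box (via Corollary~\ref{cor:existence}, applied to boxes, which are homeomorphic to Euclidean balls). So there is no in-paper argument to compare against; what you have written is the standard textbook proof, and as an outline it is correct. The reduction to the unit ball $B$ by homeomorphism invariance of the fixed-point property, the construction of the retraction $r(x)$ as the exit point of the ray from $f(x)$ through $x$ (continuous because the ray parameter solves a quadratic whose leading coefficient $\Vert x-f(x)\Vert^2$ is continuous and strictly positive, hence bounded away from zero on the compact ball), the verification that $r$ fixes the boundary, and the identification of the no-retraction theorem as the true content are all right, as is your treatment of the cases $n=0,1$. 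The one caveat is that you do not prove the non-retraction lemma itself but only name valid ways to establish it (the homology computation $H_{n-1}(S^{n-1})\cong\Z$ versus $H_{n-1}(B)=0$, degree theory, or Sperner's lemma with a limiting argument); each of these is a genuine development in its own right, so your text is an accurate and honest roadmap rather than a self-contained proof. Given that the paper itself imports the theorem without proof, this is an entirely reasonable level of detail.
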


We consider closed balls for the $\infty$-norm. 
For every $\ba=(a_1,\dots,a_n),\bb=(b_1,\dots,b_n)\in\R^n$, 
the closed ball with center $(\ba+\bb)/2$ and radius $max\{0,(a_i-b_i)/2\}$
for every $i$-th coordinate, $1\leq i\leq n$, is denoted by
\[
Box(\ba,\bb) = \{(x_1,\dots,x_n)\in\R^n \mid a_i\leq x_i\leq b_i, 1\leq i\leq n\}.
\]
A function $F:\R_{\geq0}^n\to\R_{\geq0}^n$ is said \emph{decreasing} if
$F(\bx+\bt)\leq F(\bx)$ for every $\bx,\bt\in\R_{\geq0}^n$.

\begin{corollary}
    \label{cor:existence}
Let $F:\R_{\geq0}^n\to\R_{\geq0}^n$ be a continuous and decreasing function.
Then there exists a vector $\bx\in\R_{\geq 0}^n$ such that $\bx=F(\bx)$.
\end{corollary}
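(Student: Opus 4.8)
The plan is to reduce to the Brouwer fixed point theorem, whose statement above requires a compact domain while $F$ is defined on the whole unbounded orthant $\R_{\geq0}^n$. The decreasing hypothesis is exactly what is needed to confine the image of $F$ to a compact box. Setting $\bx=\bzero$ in the inequality $F(\bx+\bt)\leq F(\bx)$ (valid for all $\bx,\bt\in\R_{\geq0}^n$) gives $F(\bt)\leq F(\bzero)$ for every $\bt\in\R_{\geq0}^n$, so $F$ attains its coordinatewise maximum at the origin. Writing $\bb=F(\bzero)\in\R_{\geq0}^n$ and using that $F$ takes values in $\R_{\geq0}^n$, this yields the two-sided bound $\bzero\leq F(\bx)\leq\bb$ for every $\bx\in\R_{\geq0}^n$.

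Next I would restrict $F$ to the box $B=Box(\bzero,\bb)=\{\bx\mid 0\leq x_i\leq b_i,\ 1\leq i\leq n\}$. The bound just established shows at once that $F(B)\subseteq B$, so $F$ restricts to a continuous self-map of $B$. It then remains to recognize $B$ as a set to which Brouwer applies: when every $b_i>0$, the diagonal rescaling $\diag(b_1^{-1},\dots,b_n^{-1})$ is an affine homeomorphism carrying $B$ onto the cube $[0,1]^n$, which is a closed ball for the $\infty$-norm, and conjugating $F$ by this homeomorphism produces a continuous self-map of a closed ball. (If some $b_i=0$, the box is degenerate in that coordinate, where $F_i\equiv 0$ forces $x_i=0$; one simply drops those coordinates and argues on the remaining nondegenerate box, or equivalently invokes Brouwer for a nonempty compact convex set.)

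Finally, Brouwer's theorem yields a fixed point of the conjugated map, and pulling it back through the homeomorphism gives a point $\bx\in B\subseteq\R_{\geq0}^n$ with $F(\bx)=\bx$, as desired. The only genuinely delicate point is locating the invariant region: once one notices that the decreasing property pins the image below $\bb=F(\bzero)$, the invariance $F(B)\subseteq B$ is immediate, and the rest is a routine application of Brouwer, modulo the harmless identification of a box with an $\infty$-ball after rescaling. I would emphasize that the argument uses only continuity and monotonicity, and that it delivers existence but not uniqueness of the fixed point at this level of generality.
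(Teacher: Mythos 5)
Your argument is correct and is essentially the paper's own proof: both identify $Box(\bzero,F(\bzero))$ as an $F$-invariant compact box using the decreasing property (image bounded above by $F(\bzero)$, below by $\bzero$) and then apply Brouwer, with the box regarded as a closed ball for the $\infty$-norm. Your extra care about degenerate coordinates where $b_i=0$ is a harmless refinement the paper glosses over, but it does not change the route.
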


\begin{proof}
Since $F$ is continuous and decreasing, we have that, for every
$\ba,\bb\in\R_{\geq0}^n$,
\[
F(Box(\ba,\bb)) \subseteq Box(F(\bb),F(\ba)).
\]
Moreover, $F$ reaches its maximal value at $\ba=\bzero$ so that
$F(\R_{\geq0}^n) \subseteq Box(\bzero, F(\bzero))$.
Then 
\[
F(Box(\bzero,F(\bzero))) \subseteq Box(F^2(\bzero),F(\bzero)) \subseteq Box(\bzero,F(\bzero))
\]
and Brouwer fixed point theorem applies since $Box(\bzero,F(\bzero))$ is a closed ball.
\end{proof}

\subsection{Banach Fixed-Point Theorem}

Let $(X, d)$ be a metric space. Then a map $T : X \to X$ is called a
\emph{contraction} mapping on $X$ if there exists $q\in[0, 1)$ such that
\[
d(T(x),T(y))\leq qd(x,y)
\]
for all $x, y \in X$.

\begin{theorem}[Banach Fixed Point Theorem] 
Let $(X, d)$ be a non-empty complete metric space with a contraction mapping $T
: X \to X$. Then $T$ admits a unique fixed-point $\bx$ in $X$.
\end{theorem}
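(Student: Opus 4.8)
The plan is to prove uniqueness first, which is immediate from the contraction hypothesis, and then to establish existence by Picard iteration.

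For uniqueness, suppose $\bx$ and $\by$ are both fixed points of $T$. The contraction inequality gives $d(\bx,\by) = d(T(\bx),T(\by)) \le q\, d(\bx,\by)$. Since $q\in[0,1)$, this forces $(1-q)\,d(\bx,\by)\le 0$, hence $d(\bx,\by)=0$ and $\bx=\by$.

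For existence, I would fix an arbitrary point $x_0\in X$ and define the iterates $x_{k+1}=T(x_k)$ for $k\ge 0$. First I would control the distance between consecutive terms: applying the contraction property $k$ times yields $d(x_{k+1},x_k)\le q^k\, d(x_1,x_0)$. Then, for $m>n$, a telescoping application of the triangle inequality together with the geometric series gives $d(x_m,x_n)\le \sum_{k=n}^{m-1} q^k\, d(x_1,x_0) \le \frac{q^n}{1-q}\, d(x_1,x_0)$. Because $q<1$, the right-hand side tends to $0$ as $n\to\infty$, so $(x_k)_{k\ge 0}$ is a Cauchy sequence. By completeness of $(X,d)$, it converges to some $\bx\in X$. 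To see that $\bx$ is a fixed point, I would use that every contraction is continuous (indeed $q$-Lipschitz), so passing to the limit in $x_{k+1}=T(x_k)$ gives $\bx=\lim_k x_{k+1}=\lim_k T(x_k)=T(\lim_k x_k)=T(\bx)$.

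I expect the main obstacle to be the Cauchy estimate: the geometric bound on $d(x_m,x_n)$ is the crux of the argument, and it is precisely where the hypothesis $q<1$ is used to force convergence. The remaining ingredients — uniqueness, the appeal to completeness, and the concluding continuity step — are routine once the iterates are known to form a Cauchy sequence.
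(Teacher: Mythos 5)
Your proof is correct and complete: the uniqueness argument from the contraction inequality, the geometric Cauchy estimate $d(x_m,x_n)\le \frac{q^n}{1-q}\,d(x_1,x_0)$ for the Picard iterates, and the concluding appeal to completeness and continuity of $T$ together constitute the standard proof of the Banach Fixed Point Theorem. The paper itself offers no proof to compare against --- it simply recalls this classical result from the literature --- so your argument fills in exactly the textbook derivation that the citation points to.
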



\section{Existence for nonnegative matrices with positive diagonal entries}

Now we compute the gradient of $f^{(M)}_i$:
\begin{equation}\label{eq:gradientfi}
\vec{\nabla}f^{(M)}_i(\bx) =
\begin{cases}
- b_i^{-2} 
(m_{i1}, \dots, m_{i,i-1}, m_{i,i+1},\dots, m_{in})^t 
& \text{ if } m_{ii}=0,\\
\displaystyle
\frac{1}{2m_{ii}}\left(\frac{b_i}{\sqrt{b_i^2+4m_{ii}}}-1\right)
(m_{i1}, \dots, m_{i,i-1}, m_{i,i+1},\dots, m_{in})^t 
& \text{ if } m_{ii}\neq 0,
\end{cases}
\end{equation}
and we conclude that
$\vec{\nabla}f^{(M)}_i\leq0$.

We now prove the existence of a fixed point of $F^{(M)}$ using Brouwer fixed-point theorem.

\begin{proposition}\label{prop:reductiontobrouwer}
Let $M=(m_{ij})\in\R^{n\times n}$ be a nonnegative real matrix
such that $m_{ii}>0$ for every $i$ with $1\leq i\leq n$.
Then there is a
positive vector $\bx=(x_1,\dots,x_n)^t>0$ such that
$M\bx=(\frac{1}{x_1},\dots,\frac{1}{x_n})^t$.
\end{proposition}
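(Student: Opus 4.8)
The plan is to realize the desired vector as a fixed point of $F^{(M)}$ and then invoke the Brouwer-based existence result of Corollary~\ref{cor:existence}. By Lemma~\ref{lem:ifffixedpoint}, a positive vector $\bx$ satisfies $M\bx=(\frac{1}{x_1},\dots,\frac{1}{x_n})^t$ if and only if it is a fixed point of $F^{(M)}$, so it suffices to produce a fixed point of $F^{(M)}$ lying in $\R_{>0}^n$.

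First I would verify that $F^{(M)}$ satisfies the hypotheses of Corollary~\ref{cor:existence}, namely that it is continuous and decreasing as a map $\R_{\geq0}^n\to\R_{\geq0}^n$. Since $m_{ii}>0$ for every $i$, each coordinate $f^{(M)}_i$ is given by the second branch of its definition, $\frac{-b_i+\sqrt{b_i^2+4m_{ii}}}{2m_{ii}}$; as $b_i$ depends linearly (hence continuously) on the remaining coordinates and the radicand $b_i^2+4m_{ii}$ is bounded below by $4m_{ii}>0$, this expression is continuous, so $F^{(M)}$ is continuous. For the decreasing property, I would invoke the gradient computation~\eqref{eq:gradientfi}, which gives $\vec{\nabla}f^{(M)}_i\leq0$: each $f^{(M)}_i$ is non-increasing in each of its arguments. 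Consequently, adding any $\bt\in\R_{\geq0}^n$ to $\bx$ only increases the arguments of each $f^{(M)}_i$ and therefore cannot increase its value, whence $F^{(M)}(\bx+\bt)\leq F^{(M)}(\bx)$. This is exactly the decreasing condition.

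With these two properties in hand, Corollary~\ref{cor:existence} yields a vector $\bx\in\R_{\geq0}^n$ with $\bx=F^{(M)}(\bx)$. It remains to check that $\bx$ is in fact positive, and this is immediate: the range of $F^{(M)}$ is contained in $\R_{>0}^n$, so $\bx=F^{(M)}(\bx)>0$. Finally, applying Lemma~\ref{lem:ifffixedpoint} to this positive fixed point gives $M\bx=(\frac{1}{x_1},\dots,\frac{1}{x_n})^t$, as required.

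I do not expect any genuinely hard step here, since the heavy lifting is carried by Corollary~\ref{cor:existence} and the preceding lemmas; the only points requiring care are confirming continuity (which relies on $m_{ii}>0$ to keep the square-root branch smooth and the denominator nonzero) and reading off the decreasing property from the already-established sign of the gradient. The positivity of the fixed point, which might otherwise be a concern when applying a fixed-point theorem on the closed nonnegative orthant, is handled automatically by the fact that $F^{(M)}$ takes values in the strictly positive orthant $\R_{>0}^n$.
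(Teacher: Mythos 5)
Your proposal is correct and follows essentially the same route as the paper's own proof: continuity and monotonicity of $F^{(M)}$ (via the gradient computation~\eqref{eq:gradientfi}), Corollary~\ref{cor:existence} for a fixed point, positivity from the range of $F^{(M)}$, and Lemma~\ref{lem:ifffixedpoint} to translate back. Your version just spells out the continuity and decreasing checks in slightly more detail than the paper does.
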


\begin{proof}
The function $F^{(M)}:\R_{\geq0}^n\to\R_{>0}^n$ is continuous since $m_{ii}\neq0$ for all $i$ such that $1\leq i\leq n$.
It is decreasing
since the entries of its gradient are zero or negative.
Thus, Corollary~\ref{cor:existence} applies and
there exists a vector $\bx\in\R_{\geq 0}^n$ such that $\bx=F^{(M)}(\bx)$.
From the definition of $F^{(M)}$, we conclude that the entries of $\bx$ are
positive, i.e., $\bx\in\R_{>0}^n$.
From Lemma~\ref{lem:ifffixedpoint}, we conclude
the existence of a
positive vector $\bx=(x_1,\dots,x_n)^t$ such that
$M\bx=(\frac{1}{x_1},\dots,\frac{1}{x_n})^t$.
\end{proof}

\begin{example}\label{ex:triangular}
Let
\[
        M =
\left(\begin{array}{rrr}
1 & 0 & 0 \\
1 & 1 & 0 \\
1 & 1 & 1
\end{array}\right)
\qquad
\text{ and }
\qquad
\bx =
\left(1,
      \frac{\sqrt{5} - 1}{2},
      \frac{\sqrt{2 \, \sqrt{5} + 22} - \sqrt{5} - 1}{4}\right)^t.
\]
We verify that
\[
M\bx=
\left(1,\frac{\sqrt{5} + 1}{2}, \frac{\sqrt{5} + \sqrt{2 \, \sqrt{5} + 22} + 1}{4}\right)^t
=
\left(1, \frac{2}{\sqrt{5} - 1}, \frac{4}{\sqrt{2 \, \sqrt{5} + 22} - \sqrt{5} - 1}\right)^t.
\]
\end{example}


\begin{proof}[Proof of Theorem~\ref{thm:main}]
Unicity follows from
Proposition~\ref{prop:perron-unicity}
since $M$ is primitive.
Existence follows from
Proposition~\ref{prop:reductiontobrouwer}
since diagonal entries of $M$ are positive.
\end{proof}

Proposition~\ref{prop:reductiontobrouwer} does not include primitive matrices
with zero entries on the diagonal since we can't apply Brouwer fixed-point
theorem when $m_{ii}=0$ for some $i$: $f_i^{(M)}$ is not continuous at $\bzero$
in this case.
But the result still holds (see \cite[Theorem 8.2]{brualdi_diagonal_1966}).
For example, let
\[
M=
\left(\begin{array}{rrr}
    0 & 0 & 1 \\
    1 & 0 & 0 \\
    0 & 1 & 1
\end{array}\right)
\qquad
\text{ and }
\qquad
\bx=\left(\sqrt{2}, \frac{1}{\sqrt{2}}, \frac{1}{\sqrt{2}}\right)^t.
\]
We verify that
$M\bx=\left(\frac{1}{\sqrt{2}},\,\sqrt{2},\,\sqrt{2}\right)^t$.

\section{Uniqueness when diagonal entries are relatively large}

To prove uniqueness in some cases including matrix $M$ from Example~\ref{ex:triangular} which is not primitive, we can use Banach fixed-point theorem.
Note that it is not possible to prove that the map $F^{(M)}$ is a contraction
for every nonnegative matrix $M$. For example, consider
\[
M =
\left(\begin{array}{rrr}
1 & 2m & 2m \\
2m & 1 & 2m \\
2m & 2m & 1
\end{array}\right)
\]
for some $m>0$.
We get that the gradient of
$F^{(M)}$ at $\bx=\bzero$ (in which case $b_i=0$ in Equation~\eqref{eq:gradientfi}) is
\[
\left(
\frac{\partial F^{(M)}}{\partial x_i}(\bzero)
\right)_{i=1,2,3}
=
\left(\begin{array}{rrr}
0 & -m & -m \\
-m & 0 & -m \\
-m & -m & 0
\end{array}\right)
\]
which can get as large as $m$ is.
For some matrices $M$, the map $F^{(M)}$ is a contraction
as we show now.

\begin{proposition}\label{prop:banach-unicity}
Let $M=(m_{ij})\in\R^{n\times n}$ be a nonnegative real matrix
such that $2m_{ii}>m_{ij}$ for every $i,j$ with $1\leq i,j\leq n$.
Then there exists a
unique positive vector $\bx=(x_1,\dots,x_n)^t>0$ such that
$M\bx=(\frac{1}{x_1},\dots,\frac{1}{x_n})^t$.
\end{proposition}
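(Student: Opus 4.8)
The plan is to derive both existence and uniqueness simultaneously from the Banach fixed-point theorem, applied to $F^{(M)}$ on the closed box $K:=Box(\bzero,F^{(M)}(\bzero))$. First I would note that taking $i=j$ in the hypothesis gives $2m_{ii}>m_{ii}$, hence $m_{ii}>0$ for every $i$, so that $F^{(M)}:\R_{\geq0}^n\to\R_{>0}^n$ is continuous and differentiable on the relevant branch. The box $K$ is closed in $\R^n$, hence complete, and the computation in the proof of Corollary~\ref{cor:existence} already shows $F^{(M)}(\R_{\geq0}^n)\subseteq K$, so in particular $F^{(M)}(K)\subseteq K$. Moreover any fixed point $\bx=F^{(M)}(\bx)$ automatically lies in $K$ and in $\R_{>0}^n$, so a unique fixed point of $F^{(M)}$ in $K$ is the unique fixed point in all of $\R_{\geq0}^n$; Lemma~\ref{lem:ifffixedpoint} then translates this into the existence and uniqueness of the sought positive vector $\bx$.

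The heart of the matter is to show that $F^{(M)}$ is a contraction of $(K,\|\cdot\|_\infty)$. Writing, for $m_{ii}\neq0$,
\[
c_i(\bx)=\frac{1}{2m_{ii}}\left(1-\frac{b_i}{\sqrt{b_i^2+4m_{ii}}}\right),
\]
Equation~\eqref{eq:gradientfi} reads $\partial f^{(M)}_i/\partial x_j=-c_i(\bx)\,m_{ij}$ for $j\neq i$, and $c_i(\bx)\in(0,\tfrac{1}{2m_{ii}}]$ since $b_i\geq0$. Applying the mean value inequality coordinatewise yields
\[
\|F^{(M)}(\bx)-F^{(M)}(\by)\|_\infty\leq q\,\|\bx-\by\|_\infty
\quad\text{with}\quad
q=\sup_{\bx\in K}\ \max_{1\leq i\leq n}\ \sum_{j\neq i}c_i(\bx)\,m_{ij}.
\]
Everything thus reduces to proving $q<1$. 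Here I would invoke the bound $c_i(\bx)\leq\frac{1}{2m_{ii}}$ together with the hypothesis $2m_{ii}>m_{ij}$, which says precisely that each off-diagonal entry of row $i$ is dominated by twice the diagonal entry, and conclude that the weighted row sums $\sum_{j\neq i}\tfrac{m_{ij}}{2m_{ii}}$ stay below $1$; taking the maximum over $i$ produces a single constant $q<1$ that is independent of $\bx$.

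I expect this contraction estimate to be the main obstacle. The bound $c_i\leq\frac{1}{2m_{ii}}$ is sharp only at $b_i=0$, that is as $\bx\to\bzero$, which is exactly the corner of $K$ where the gradient of $F^{(M)}$ is largest (compare the matrix with off-diagonal entries $2m$ discussed just before the statement, whose gradient at $\bzero$ has entries $-m$). Consequently the verification $q<1$ is where the diagonal-dominance content of the hypothesis must be used in full, and this is the delicate quantitative step of the whole argument. If the raw estimate at $\bzero$ is not decisive, I would sharpen it by replacing $K$ with the smaller invariant box $Box(F^{(M)}(F^{(M)}(\bzero)),F^{(M)}(\bzero))$ — which still contains every fixed point and is still mapped into itself by the decreasing property of $F^{(M)}$ — since on it each coordinate, and hence each $b_i$, is bounded away from $0$, forcing every $c_i(\bx)$ strictly below $\frac{1}{2m_{ii}}$. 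Once $q<1$ is secured, the Banach fixed-point theorem furnishes the unique fixed point and completes the proof.
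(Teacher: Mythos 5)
Your setup is sound, and in fact your bookkeeping of the Lipschitz constant is more careful than the paper's: the induced $\infty$-norm of the Jacobian of $F^{(M)}$ is indeed the maximal row sum $q=\sup_{\bx}\max_i\sum_{j\neq i}c_i(\bx)\,m_{ij}$, since the mean value theorem pairs $\nabla f^{(M)}_i$ with $\bb-\ba$ through H\"older as $\Vert\nabla f^{(M)}_i\Vert_1\,\Vert\bb-\ba\Vert_\infty$. But the step where you conclude $q<1$ is a genuine gap. The hypothesis $2m_{ii}>m_{ij}$ only gives $m_{ij}/(2m_{ii})<1$ \emph{termwise}; the row sum $\sum_{j\neq i}m_{ij}/(2m_{ii})$ has $n-1$ terms and can be as large as $n-1$. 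Concretely, for $n=3$ with $m_{ii}=1$ and $m_{ij}=1.9$ for $j\neq i$, the hypothesis holds but your bound gives $q\leq 1.9$, which proves nothing. Your proposed repair --- shrinking to the box $Box(F^{(M)}(F^{(M)}(\bzero)),F^{(M)}(\bzero))$ so that $b_i$ is bounded away from $0$ and $c_i(\bx)<\tfrac{1}{2m_{ii}}$ strictly --- cannot close a deficit of a factor of roughly $n-1$; it only shaves off an $\varepsilon$. As written, your argument establishes the contraction property only for $n=2$, or for general $n$ under the stronger row-dominance hypothesis $2m_{ii}>\sum_{j\neq i}m_{ij}$.

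For comparison: the paper proves existence separately via Proposition~\ref{prop:reductiontobrouwer} (Brouwer) and uses Banach only for uniqueness, whereas you get both at once; that part of your plan is fine, including the observation that every fixed point lies in the invariant box. For the contraction estimate the paper bounds only $\Vert\nabla f^{(M)}_i\Vert_\infty\leq C<1$ (the largest single entry of the gradient) and then invokes ``Cauchy--Schwarz'' with the $\infty$-norm on both factors; that inequality $|\bx\cdot\by|\leq\Vert\bx\Vert_\infty\Vert\by\Vert_\infty$ is not valid (take $\bx=\by=(1,\dots,1)$), so the paper's proof hides exactly the same dimensional factor that your honest accounting exposes. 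In other words, you have not found a wrong turn that the paper avoids --- you have made explicit the quantitative obstruction that the paper's own argument glosses over, and neither version as stated yields $q<1$ from the entrywise hypothesis when $n\geq 3$.
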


\begin{proof}
Existence follows from Proposition~\ref{prop:reductiontobrouwer}.

To prove uniqueness we use the Banach Fixed Point Theorem and we show that
$F^{(M)}$ is a contraction.
From the hypothesis, there exists a constant $C>0$ such that
\[
0\leq\frac{m_{ij}}{2m_{ii}}\leq C<1
\]
for every $i,j$ with $1\leq i,j\leq n$.
Thus from Equation~\eqref{eq:gradientfi} 
and since
\[
\left\vert\frac{b_i}{\sqrt{b_i^2+4m_{ii}}}-1\right\vert\leq 1
\]
we get that
\begin{equation}
\left\Vert\vec{\nabla}f^{(M)}_i(\bx)\right\Vert_\infty
\leq
\frac{1}{2m_{ii}}
\left\Vert (m_{i1}, \dots, m_{i,i-1}, m_{i,i+1},\dots, m_{in})^t\right\Vert_\infty
\leq C
\end{equation}
for every $\bx\in\R_{\geq0}^{n-1}$ and $1\leq i\leq n$.
The function $f^{(M)}_i$ is differentiable on $\R_{\geq0}^{n-1}$.
Using the Mean value theorem in several variables,
for every $\ba,\bb\in\R_{\geq0}^{n-1}$ there exists $c\in[0,1]$
such that
\[
f^{(M)}_i(\bb)-f^{(M)}_i(\ba)
=
\nabla f^{(M)}_i\left((1-c)\ba+c\bb\right) \cdot (\bb-\ba).
\]
Therefore, by the Cauchy-Schwarz inequality ($|\bx\cdot\by|\leq\Vert\bx\Vert\Vert\by\Vert$),
\[
\displaystyle \left |f^{(M)}_i(\bb)-f^{(M)}_i(\ba)\right|
\leq
C \left\Vert\bb-\ba\right\Vert_\infty.
\]
Thus $f^{(M)}_i$ is a contraction for every $i$ with $1\leq i\leq n$.
Then $F^{(M)}$ is a contraction.
The conclusion is deduced from Lemma~\ref{lem:ifffixedpoint}.
\end{proof}

Proposition~\ref{prop:banach-unicity} seems to hold when $2m_{ii}\leq m_{ij}$. A
possible option in this case is to show that some power of $F^{(M)}$ is a
contraction
and use a stronger version of Banach theorem:
if some iterate $T^n$ of $T$ is a contraction,
then $T$ has a unique fixed point.
More work has to be done.

\section*{Acknowledgements}

I am thankful to
Daniel S. Maynard 
for fruitful exchanges.
The question came up in his research
on interference competition in fungi
\cite{maynard_diversity_2017}
exploring how spatial structure and species diversity
interactively structure the community.

\bibliographystyle{alpha} 
\bibliography{biblio}

\end{document}